\nonstopmode \numberwithin{equation}{section}
\newtheorem{thm}{Theorem}[section]
\newtheorem{cor}{Corollary}[section]
\newtheorem{lem}{Lemma}[section]
\theoremstyle{definition}
\newcounter{minutes}\setcounter{minutes}{\time}
\newcounter{hours}\setcounter{hours}{\time}
\newcounter {own}
\def\theown {\thesection       .\arabic{own}}
\newcounter{alphabet}
\begin{document}

\title{Schwarzian Norm Estimate for Functions in Generalized Robertson Class}

\author{Sanjit Pal}
\address{Sanjit Pal,
Department of Mathematics, National Institute of Technology Durgapur,
Durgapur- 713209, West Bengal, India.}
\email{palsanjit6@gmail.com}

\subjclass[2010]{Primary 30C45, 30C55}
\keywords{univalent functions; generalized $\alpha$-spirallike functions; generalized Robertson functions; Schwarzian norm}

\def\thefootnote{}
\footnotetext{ {\tiny File:~\jobname.tex,
printed: \number\year-\number\month-\number\day,
          \thehours.\ifnum\theminutes<10{0}\fi\theminutes }
} \makeatletter\def\thefootnote{\@arabic\c@footnote}\makeatother

\begin{abstract}
Let $\mathcal{A}$ be the class of analytic functions $f$ in the unit disk $\mathbb{D}=\{z\in\mathbb{C}:|z|<1\}$ with the normalized conditions $f(0)=0$, $f'(0)=1$. For $-\pi/2<\alpha<\pi/2$ and $0\le \beta<1$, let $\mathcal{S}_{\alpha}(\beta)$ be the subclass of $\mathcal{A}$ consisting of functions $f$ that satisfy the relation  
$${\rm Re\,} \left\{e^{i\alpha}\left(1+\frac{zf''(z)}{f'(z)}\right)\right\}>\beta\cos{\alpha}\quad\text{for}~z\in\mathbb{D}.$$
In the present study, we will compute the sharp estimate of the pre-Schwarzian and Schwarzian norms for functions in $\mathcal{S}_{\alpha}(\beta)$.
\end{abstract}

\thanks{}

\maketitle
\pagestyle{myheadings}
\markboth{Sanjit Pal}{Schwarzian norm of generalized Robertson class}

\section{Introduction}
Let $\mathcal{H}$ be the set of all analytic functions $f$ in the unit disk $\mathbb{D}:=\{z\in\mathbb{C}:|z|<1\}$, and  $\mathcal{LU}$ denote the subclass of $\mathcal{H}$, which contains all locally univalent functions, i.e., $\mathcal{LU}=\{f\in\mathcal{H}: f'(z)\ne 0~\text{for}~ z\in\mathbb{D}\}$. The pre-Schwarzian and Schwarzian derivatives for a locally univalent function $f\in\mathcal{LU}$ are defined by
$$P_f(z):=\frac{f''(z)}{f'(z)}\quad \text{and}\quad S_f(z):=\left(\frac{f''(z)}{f'(z)}\right)^{'}-\frac{1}{2}\left(\frac{f''(z)}{f'(z)}\right)^2,$$
respectively. Also, the pre-Schwarzian and Schwarzian norms (the hyperbolic sup-norms) of $f\in\mathcal{LU}$ are defined by
$$||P_f||:=\sup\limits_{z\in\mathbb{D}}(1-|z|^2)|P_f
(z)| \quad \text{and}\quad ||S_f||:=\sup\limits_{z\in\mathbb{D}}(1-|z|^2)^2|S_f(z)|,$$
respectively. These norms have significant meanings in the theory of Teichm\"{u}ller spaces (see \cite{Lehto-1987}). It is well known that for a univalent function $f\in\mathcal{LU}$, the pre-Schwarzian and Schwarzian norms satisfies $||P_f||\leq 6$ and $||S_f||\leq 6$, respectively (see \cite{Kraus-1932,Nehari-1949}) and the estimates are sharp. On the other hand, it is also known that for a locally univalent function $f$ in $\mathcal{LU}$ with $||P_f||\leq 1$ (see \cite{Becker-1972,Becker-Pommerenke-1984}) or $||S_f||\leq 2$ (see \cite{Nehari-1949}), the function $f$ is univalent in $\mathbb{D}$. The constants $1$ and $2$ cannot be replaced by a smaller number. In 1976, Yamashita \cite{Yamashita-1976} proved that $||P_f ||<\infty$ if and only if $f$ is uniformly locally univalent in $\mathbb{D}$. Moreover, if $||P_f||<2$ then $f$ is bounded in $\mathbb{D}$ (see \cite{Kim-Sugawa-2002}). Due to computational difficulties, several researchers have studied the pre-Schwarzian norm compared to the Schwarzian norm (see, for example, \cite{Ali-Sanjit-2022,Sugawa-1998,Yamashita-1999} and references therein).\\

Let $\mathcal{A}$ denote the subclass of $\mathcal{H}$ consisting of functions $f$ normalized by $f(0)=f'(0)-1=0$. Therefore, for any function $f\in\mathcal{A}$ has the following Taylor series expansion
\begin{equation}\label{s-00001}
f(z)=z+\sum\limits_{n=2}^{\infty}a_nz^n.
\end{equation}
Let $\mathcal{S}$ be the set that contains all univalent functions $f$ in $\mathcal{A}$. A function $f\in\mathcal{A}$ is said to be starlike of order $\alpha$ ($0\le \alpha< 1$) if ${\rm Re\,} (zf'(z)/f(z)) > \alpha$ for $z\in\mathbb{D}$. A function $f\in\mathcal{A}$ is said to be convex of order $\alpha$ ($0\le \alpha< 1$) if  ${\rm Re\,} (1 + zf''(z)/f'(z))> \alpha$ for $z\in\mathbb{D}$. The sets of all starlike and convex functions of order $\alpha$ in $\mathcal{S}$ are denoted by $\mathcal{S}^*(\alpha)$ and $\mathcal{C}(\alpha)$, respectively. It is easy to show that for any function $f$ in $\mathcal{A}$ belongs to $\mathcal{C}(\alpha)$ if and only if $zf'\in\mathcal{S}^*(\alpha)$. Moreover, $\mathcal{S}^*(0)=:\mathcal{S}^*$ and $\mathcal{C}(0)=:\mathcal{C}$ are the usual classes of starlike and convex functions, respectively. More details about these classes, we refer the monographs \cite{Duren-1983,Goodman-book-1983}.\\

For $-\pi/2<\alpha<\pi/2$ and $0\le \beta<1$, a function $f\in\mathcal{A}$ is said to be $\alpha$-spirallike functions of order $\beta$ if
$${\rm Re\,} \left\{e^{i\alpha}\left(\frac{zf'(z)}{f(z)}\right)\right\}>\beta\cos{\alpha}\quad\text{for}~z\in\mathbb{D}.$$
The class of all $\alpha$-spirallike functions of order $\beta$ is denoted by $\mathcal{SP}_{\alpha}(\beta)$. In 1967, Libera \cite{Libera-1967} introduced the class $\mathcal{SP}_{\alpha}(\beta)$ and proved that functions in this
class are univalent for all $\alpha$ and $\beta$. In particular, functions in $ \mathcal{SP}_{\alpha}(0)=:\mathcal{SP}_{\alpha}$ are called $\alpha$-spirallike functions, which was introduced by \v{S}pa\v{c}ek \cite{Spacek-1933} in 1933. Note that a domain $\Omega$ containing the origin is called $\alpha$-spirallike if for each point $w_0$ in $\Omega$ the arc of the $\alpha$-spiral from origin to the point $w_0$ entirely lies in $\Omega$. A function $f\in\mathcal{A}$ is said to be an $\alpha$-spirallike if $f(\mathbb{D})$ is an $\alpha$-spirallike domain. Later on, Robertson \cite{Robertson-1969} introduced a new class of functions with the help of $\alpha$-spirallike functions and it is denoted by $\mathcal{S}_{\alpha}$ and it is defined by a function $f\in\mathcal{S}_{\alpha}$ if and only if $zf'(z)\in \mathcal{SP}_{\alpha}$. In the same paper, Robertson \cite{Robertson-1969} also proved that functions in $\mathcal{S}_{\alpha}$ need not be univalent in $\mathbb{D}$ and using Nehari's test, he showed that functions in $\mathcal{S}_{\alpha}$ is univalent if $\alpha$ satisfies the inequality $0<\cos{\alpha}\le x_0\approx  0.2034\cdots$. In 1975, Pfaltzgraff \cite{Pfaltzgraff-1975} proved that functions in $\mathcal{S}_{\alpha}$ are univalent if $0<\cos{\alpha}\le 1/2$. In 1971, Pinchuk \cite{Pinchuk-1971} generalized this class and defined a new class $\mathcal{S}_{\alpha}(\beta)$ by the condition that $f\in\mathcal{S}_{\alpha}(\beta)$ if and only if $zf'(z)\in \mathcal{SP}_{\alpha}(\beta)$ (see also Chichra \cite{Chichra-1975}). Functions in $\mathcal{S}_{\alpha}(\beta)$ are known as $\alpha$-Robertson functions of order $\beta$. A function $f\in \mathcal{A}$ is in the class $\mathcal{S}_{\alpha}(\beta)$ if and only if
$${\rm Re\,} \left\{e^{i\alpha}\left(1+\frac{zf''(z)}{f'(z)}\right)\right\}>\beta\cos{\alpha}\quad\text{for}~z\in\mathbb{D}.$$

One of most important and useful tool in geometric function theory is differential subordination method. With the help of differential subordination, many problems can be easily handled. A function $f\in\mathcal{H}$ is said to be subordinate to another function $g\in\mathcal{H}$ if there exists an analytic function $\omega:\mathbb{D}\rightarrow\mathbb{D}$ with $w(0)=0$ such that $f(z)=g(\omega(z))$ and it is simply denoted by $f(z)\prec g(z)$ or $f\prec g$. Moreover, if $g$ is univalent, then $f\prec g$ if and only if $f(0)=g(0)$ and $f(\mathbb{D})\subset g(\mathbb{D})$. In terms of subordination, the class $\mathcal{S}_{\alpha}(\beta)$ can be defined in the following form
\begin{equation}\label{t-00010}
f\in\mathcal{S}_{\alpha}(\beta)\iff 1+\frac{zf''(z)}{f'(z)}\prec \frac{1+Az}{1-z},\quad A=e^{-i\alpha}(e^{-i\alpha}-2\beta\cos{\alpha}).
\end{equation}

Pre-Schwarzian and Schwarzian derivatives are valuable tools for investigating the geometric properties of analytic mappings. The pre-Schwarzian norm, however, has garnered more attention than the Schwarzian norm due to the computational difficulties. Even though the classical work on the Schwarzian derivative in connection to geometric function theory had been done in \cite{Kuhnau-1971, Nehari-1949}, for various subclasses of locally univalent functions, more research on the Schwarzian derivative still needs to be done. Determining the Schwarzian norm for prominent subclasses of locally univalent functions is an attractive topic in Teichm\"{u}ller spaces. For the class of convex functions $\mathcal{C}$, the Schwarzian norm satisfies $||S_f||\le 2$, and the estimate is sharp (see \cite{Lehto-1977, Nehari-1976, Robertson-1969}). In 1996, Suita \cite{Suita-1996} studied the class $\mathcal{C}(\alpha)$, $0\le \alpha< 1$ and proved that the Schwarzian norm satisfies the following sharp inequality
$$||S_f||\le
\begin{cases}
2 & \text{ if }~ 0\le \alpha\le 1/2,\\
8\alpha(1-\alpha) & \text{ if }~ 1/2< \alpha<1.
\end{cases}
$$

A function $f\in\mathcal{A}$ is said to be strongly starlike (respectively, strongly convex) of order $\alpha$, $0<\alpha\le 1$ if $|\arg\{zf'(z)/f(z)\}|<\pi\alpha/2$ (respectively, $|\arg\{1+zf''(z)/f'(z)\}|<\pi\alpha/2$) for $z\in\mathbb{D}$. The classes of all strongly starlike and strongly convex functions of order $\alpha$ are denoted by $\mathcal{S}^*_{\alpha}$ and $\mathcal{K}_{\alpha}$, respectively. Chiang \cite{Chiang-1991} studied the Schwarzian norm for the class $\mathcal{S}^*_{\alpha}$ and proved $||S_f||\le 6\sin(\pi\alpha/2)$. In 2011, Kanas and Sugawa \cite{Kanas-Sugawa-2011} studied the Schwarzian norm for the class $\mathcal{K}_{\alpha}$ and proved the sharp inequality $||S_f||\leq 2\alpha$.\\

A function $f\in \mathcal{A}$ is said to be uniformly convex function if every circular arc (positively oriented) of the form $\{z\in\mathbb{D}: |z -\xi|=r\}$, $\xi\in\mathbb{D}$, $0<r<|\xi|+1$ is mapped by $f$ univalently onto a convex arc. The class $\mathcal{UCV}$ denotes of all uniformly convex functions. It is easy to prove that $\mathcal{UCV}\subset\mathcal{C}$. It is well known that (see \cite{Goodman-1991,Ma-Minda-1992,Ronning-1993}) a function $f\in \mathcal{A}$ is in $\mathcal{UCV}$ if and only if
$${\rm Re\,}\left(1+\frac{zf''(z)}{f'(z)}\right)> \left|\frac{zf''(z)}{f'(z)}\right|~~\text{for}~ z\in\mathbb{D}.$$
Also, in the same paper, Kanas and Sugawa proved the sharp estimate of the Schwarzian norm $||S_f||\le 8/\pi^2$ for functions in the class $\mathcal{UCV}$. In 2012, Bhowmik and Wirths \cite{Bhowmik-Wirths-2012} studied the class of concave functions $\mathcal{C}o(\alpha)$ for $1\le \alpha\le 2$ and obtained the sharp estimate $||S_f||\le 2(\alpha^2-1)$ for $f\in\mathcal{C}o(\alpha)$.  Recently, Ali and Pal \cite{Ali-Sanjit-2022a} studied the classes $\mathcal{G}(\beta)$ with $\beta>0$ and $\mathcal{F}(\alpha)$ with $-1/2\le \alpha\le 0$, consisting of functions in $\mathcal{A}$ that satisfy the relations ${\rm Re\,}\left(1+zf''(z)/f'(z)\right)<1+\beta/2$, and ${\rm Re\,}\left(1+zf''(z)/f'(z)\right)>\alpha$ for $z\in\mathbb{D}$, respectively, and proved the sharp estimates 
$$||S_f||\le 2\beta(\beta+2)\quad \text{for}~ f\in\mathcal{G}(\beta),$$
and $$||S_f||\le \frac{2(1-\alpha)}{1+\alpha}\quad \text{for}~ f\in\mathcal{F}(\alpha).$$
In 2023, Ali and Pal \cite{Ali-Sanjit-2023c} studied the class $\mathcal{S}_{\alpha}$ and proved that the sharp estimate of the Schwarzian norm satisfies the following inequalities
$$||S_f||\le
\begin{cases}
\dfrac{2\cos{\alpha}}{1-\sin{|\alpha|}}&~\text{for}~|\alpha|\le\dfrac{\pi}{6},\\[3mm]
8\cos{\alpha}\sin{|\alpha|} &~\text{for}~|\alpha|>\dfrac{\pi}{6}.
\end{cases}
$$
 A function $f\in\mathcal{A}$ is said to be  Janowski convex function if $1+zf''(z)/f'(z)\prec(1+Az)/(1+Bz)$, where $-1\le B<A\le 1$. The class of all Janowski convex functions is denoted by $\mathcal{C}(A,B)$ (see \cite{Janowski-1973a,Janowski-1973b}).  Also, Ali and Pal \cite{Ali-Sanjit-2022b} studied this class and obtained the sharp estimate of the Schwarzian norm for functions in $\mathcal{C}(A,B)$.\\
 
 Before we move to our main results, let us discuss some fundamental lemmas that we will utilize throughout the article to derive our primary results. Let $\mathcal{B}$ be the set of all analytic functions $\omega:\mathbb{D}\rightarrow\mathbb{D}$ and $\mathcal{B}_0$ be the subclass of $\mathcal{B}$ consists of functions $\omega$ such that $\omega(0)=0$. Functions in $\mathcal{B}_0$ are called Schwarz functions. According to Schwarz's lemma, if $\omega\in\mathcal{B}_0$, then $|\omega(z)|\le |z|$ and $|\omega'(0)|\le 1$. The equality occurs in any one of the inequalities if and only if $\omega(z)=e^{i\alpha}z$, $\alpha\in\mathbb{R}$. An extension of Schwarz lemma, known as Schwarz-Pick lemma that gives $|\omega'(z)|\le (1-|\omega(z)|^2)/(1-|z|^2)$, $z\in\mathbb{D}$ and $\omega\in\mathcal{B}$. In 1931, Dieudonn\'{e} \cite{Dieudonne-1931} obtained the exact region of the variability of $\omega'(z_0)$ for a given $z_0\in\mathbb{D}$ over the class $\mathcal{B}_0$.

\begin{lem}[Dieudonn\'{e}'s lemma]\cite{Dieudonne-1931, Duren-1983}\label{lem-001}
Let $\omega\in\mathcal{B}_0$ and $z_0\ne 0$ be a fixed point in $\mathbb{D}$. The region of values of $\omega'(z_0)$ is the closed disk
\begin{equation*}
\left|\omega'(z_0)-\frac{\omega(z_0)}{z_0}\right|\le \frac{|z_0|^2-|\omega(z_0)|^2}{|z_0|(1-|z_0|^2)}.
\end{equation*}
Moreover, the equality holds if and only if $\omega$ is a Blaschke product of degree $2$ fixing $0$.
\end{lem}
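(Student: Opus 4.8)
The plan is to peel off the forced zero of $\omega$ at the origin and reduce everything to the Schwarz and Schwarz--Pick lemmas. Since $\omega\in\mathcal{B}_0$ vanishes at $0$, the quotient $\psi(z):=\omega(z)/z$ extends to an analytic function on $\mathbb{D}$, and Schwarz's lemma gives $|\psi(z)|=|\omega(z)|/|z|\le 1$ throughout $\mathbb{D}$; by the maximum principle $\psi$ either maps $\mathbb{D}$ into $\mathbb{D}$, or is a unimodular constant, the latter being precisely the degenerate case $\omega(z)=e^{i\theta}z$ (so $|\omega(z_0)|=|z_0|$). Differentiating $\omega(z)=z\psi(z)$ at $z_0$ and using $\psi(z_0)=\omega(z_0)/z_0$ yields the identity
$$\omega'(z_0)-\frac{\omega(z_0)}{z_0}=z_0\,\psi'(z_0),$$
so the whole statement becomes a statement about the range of $\psi'(z_0)$.

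For the inclusion, I would apply the Schwarz--Pick lemma to $\psi$ at $z_0$:
$$|\psi'(z_0)|\le\frac{1-|\psi(z_0)|^2}{1-|z_0|^2}=\frac{|z_0|^2-|\omega(z_0)|^2}{|z_0|^2\,(1-|z_0|^2)},$$
and multiplying through by $|z_0|$ in the displayed identity gives exactly the asserted disk inequality (the unimodular-constant case is consistent, since then both sides vanish). For the reverse inclusion and the equality statement, I would linearize by pre- and post-composing with disk automorphisms: put $a:=\omega(z_0)/z_0$ and, for $\psi\in\mathcal{B}$ with $\psi(z_0)=a$, set $g:=\varphi_a\circ\psi\circ\varphi_{z_0}^{-1}$ with $\varphi_c(\zeta)=(\zeta-c)/(1-\bar c\,\zeta)$; then $g\in\mathcal{B}_0$, and conversely every $g\in\mathcal{B}_0$ arises this way via $\psi=\varphi_a^{-1}\circ g\circ\varphi_{z_0}$. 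A chain-rule computation gives $\psi'(z_0)=\frac{1-|a|^2}{1-|z_0|^2}\,g'(0)$, while Schwarz's lemma says $g'(0)$ ranges over all of $\overline{\mathbb{D}}$ (e.g.\ $g(\zeta)=\lambda\zeta$, $|\lambda|\le1$), with $|g'(0)|=1$ forcing $g$, hence $\psi$, to be a disk automorphism. Tracking this through the identity above shows that $\omega'(z_0)$ fills out the entire closed disk, that each such $\omega=z\psi$ indeed lies in $\mathcal{B}_0$ (as $|\omega(z)|\le|z|<1$), and that equality holds exactly when $\psi$ is a M\"obius automorphism of $\mathbb{D}$, i.e.\ when $\omega(z)=z\psi(z)$ is a Blaschke product of degree $2$ fixing $0$ (a simple zero at $0$ and one more in $\mathbb{D}$).

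The argument is essentially routine; the only points that need a little care are the boundary behaviour of the Schwarz--Pick estimate for $\psi$ — dealt with by the maximum-principle dichotomy between a genuine self-map of $\mathbb{D}$ and a rotation — and the explicit automorphism bookkeeping in the surjectivity part, where one must verify that the extremal functions are exactly the degree-$2$ Blaschke products fixing $0$ while interior values of $\omega'(z_0)$ come from non-automorphic choices of $g$. I expect that chain-rule normalization to be the main (still minor) obstacle.
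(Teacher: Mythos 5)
The paper does not prove this lemma---it is quoted from Dieudonn\'e and Duren---so there is no in-paper argument to compare against; your proof is the standard one (factor out the zero as $\omega=z\psi$, apply Schwarz--Pick to $\psi$ at $z_0$, and use conjugation by disk automorphisms for the surjectivity and the extremal case), the chain-rule normalization $\psi'(z_0)=\frac{1-|a|^2}{1-|z_0|^2}\,g'(0)$ is computed correctly, and the argument is complete and correct. The only caveat is the degenerate case $|\omega(z_0)|=|z_0|$, where the disk collapses to a point and the rotation $\omega(z)=e^{i\theta}z$ attains ``equality'' without being a degree-$2$ Blaschke product; this imprecision is already present in the lemma as stated, not a gap in your proof.
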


Dieudonn\'{e}'s lemma is an extension of both Schwarz's and Schwarz-Pick's lemma. Here, we remark that a Blaschke product of degree $n\in\mathbb{N}$ is of the form
$$B(z)=e^{i\theta}\prod_{j=1}^{n}\frac{z-z_j}{1-\bar{z_j}z},\quad z,z_j\in\mathbb{D}, ~\theta\in\mathbb{R}.$$
Dieudonn\'{e}'s lemma will become very important in proving our primary results and we also utilize this lemma to generate the extremal functions by using the Blaschke product.\\

In the present study, we mainly focus on the class $\mathcal{S}_{\alpha}(\beta)$ and aim to determine the estimate of the modulus of the Schwarzian derivative for functions in the class $\mathcal{S}_{\alpha}(\beta)$, where $-\pi/2<\alpha<\pi/2$ and $0\le \beta<1$. By using Dieudonn\'{e}'s lemma, we will prove this estimate is sharp for real values of $z$. This result will yield a sharp estimate of the Schwarzian norm for functions in this class. Also, we determine the sharp estimate of the pre-Schwarzian norm for functions in the class  $\mathcal{S}_{\alpha}(\beta)$, where $|\alpha|<\pi/2$ and $\beta\in [0,1)$.

\section{Main Results}

\begin{thm}\label{Thm-t-0005}
For $-\pi/2<\alpha<\pi/2$ and  $0\le \beta<1$, let $f$ be in the class $\mathcal{S}_{\alpha}(\beta)$. Then the Schwarzian derivative $S_f$ satisfies the following inequalities:

\begin{enumerate}
\item[(i)] If  $\sin^2{\alpha}+\beta^2\cos^2{\alpha}\le 1/4$, then
\begin{equation}\label{t-00015}
|S_f(z)|\le \dfrac{2(1-\beta)\cos{\alpha}\{1-(1-|z|^2)\sqrt{\sin^2{\alpha}+\beta^2\cos^2{\alpha}}\}}{(1-|z|^2)^2(1-\sqrt{\sin^2{\alpha}+\beta^2\cos^2{\alpha}})}\quad\text{for}~z\in\mathbb{D}.
\end{equation}

\item[(ii)] If $\sin^2{\alpha}+\beta^2\cos^2{\alpha}> 1/4$ and $z\in\mathbb{D}$, then
\begin{equation}\label{t-00020}
|S_f(z)|\le
\begin{cases}
\dfrac{2(1-\beta)\cos{\alpha}\{1-(1-|z|^2)\sqrt{\sin^2{\alpha}+\beta^2\cos^2{\alpha}}\}}{(1-|z|^2)^2(1-\sqrt{\sin^2{\alpha}+\beta^2\cos^2{\alpha}})}& ~\text{for}~|z|<\lambda,\\[6mm]
\dfrac{2(1-\beta)\cos{\alpha}\sqrt{\sin^2{\alpha}+\beta^2\cos^2{\alpha}}}{(1-|z|)^2}&~ \text{for}~|z|\ge\lambda,
\end{cases}
\end{equation}
where
\begin{equation}\label{t-00025}
\lambda=\dfrac{1-\sqrt{\sin^2{\alpha}+\beta^2\cos^2{\alpha}}}{\sqrt{\sin^2{\alpha}+\beta^2\cos^2{\alpha}}}.
\end{equation}
\end{enumerate}
\end{thm}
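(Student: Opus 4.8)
The plan is to reduce everything, via the subordination characterization \eqref{t-00010}, to a sharp pointwise bound for a Schwarz function and then to a one–variable optimization. First I would write $p(z)=1+zf''(z)/f'(z)$, so that by \eqref{t-00010} there is $\omega\in\mathcal{B}_0$ with $p=(1+A\omega)/(1-\omega)$, where $A=e^{-i\alpha}(e^{-i\alpha}-2\beta\cos\alpha)$. Since $f''/f'=(p-1)/z$, a short computation gives the identity $S_f=p'/z-(p^2-1)/(2z^2)$. Substituting $p=(1+A\omega)/(1-\omega)$, using $p'=(1+A)\omega'/(1-\omega)^2$ together with the two algebraic identities $1+A=2(1-\beta)\cos\alpha\,e^{-i\alpha}$ and $A-1=-2e^{-i\alpha}(\beta\cos\alpha+i\sin\alpha)$, I expect this to collapse to
\[
S_f(z)=\frac{2(1-\beta)\cos\alpha\,e^{-i\alpha}}{z^2(1-\omega(z))^2}\Big[\,z\omega'(z)-\omega(z)+e^{-i\alpha}(\beta\cos\alpha+i\sin\alpha)\,\omega(z)^2\,\Big].
\]

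Next I would fix $z$, put $r=|z|$, $a=\omega(z)$, $\rho=|a|\le r$, and $\mu=\sqrt{\sin^2\alpha+\beta^2\cos^2\alpha}$ (so that $|\beta\cos\alpha+i\sin\alpha|=\mu$ and $0\le\mu<1$ since $\beta<1$). Dieudonn\'{e}'s lemma (Lemma~\ref{lem-001}), after multiplying by $|z|$, gives $z\omega'(z)-\omega(z)=\zeta$ with $|\zeta|\le (r^2-\rho^2)/(1-r^2)$, so the bracket above is exactly $\zeta+e^{-i\alpha}(\beta\cos\alpha+i\sin\alpha)a^2$, of modulus at most $(r^2-\rho^2)/(1-r^2)+\mu\rho^2$. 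Combined with $|1-a|^2\ge(1-\rho)^2$ this yields
\[
|S_f(z)|\le \Phi(\rho):=\frac{2(1-\beta)\cos\alpha}{r^2(1-\rho)^2}\left(\frac{r^2-\rho^2}{1-r^2}+\mu\rho^2\right),\qquad 0\le\rho\le r,
\]
and it remains to maximize $\Phi$ over $[0,r]$.

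For the optimization I would write $k=1-\mu(1-r^2)\in(0,1)$ and note that $\Phi(\rho)$ is a positive constant times $(r^2-k\rho^2)/(1-\rho)^2$, whose derivative has the sign of $r^2-k\rho$; hence $\Phi$ increases on $[0,r^2/k]$ and decreases afterwards. Therefore the maximum on $[0,r]$ is attained at $\rho=r^2/k$ when $r^2/k\le r$ and at $\rho=r$ otherwise, and $r^2/k\le r\iff r\le (1-\mu)/\mu=\lambda$. If $\mu^2\le 1/4$ then $\lambda\ge 1$, so the first case always holds; substituting $\rho=r^2/k$ and simplifying with $k-r^2=(1-r^2)(1-\mu)$ gives precisely \eqref{t-00015}. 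If $\mu^2>1/4$ then $\lambda<1$: for $|z|<\lambda$ the same substitution gives the first branch of \eqref{t-00020}, and for $|z|\ge\lambda$ substituting $\rho=r$ gives the second branch $2(1-\beta)\cos\alpha\,\mu/(1-r)^2$; a direct check shows the two branches agree at $r=\lambda$.

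Finally, for sharpness along the real axis: when $|z|\ge\lambda$ the choice $\omega(z)=z$ (equivalently $f'(z)=(1-z)^{-(1+A)}$) already turns the estimate into an equality at $z=r$; in the remaining range the equality clause of Dieudonn\'{e}'s lemma forces a degree-$2$ Blaschke product fixing $0$, and I would pick the one with $\omega(r)=r^2/k>0$ and $r\omega'(r)-\omega(r)$ pointing along $e^{-i\alpha}(\beta\cos\alpha+i\sin\alpha)$, which produces an explicit extremal $f\in\mathcal{S}_\alpha(\beta)$. The main obstacles I anticipate are twofold: (a) the algebra in the first step, making the $\omega'$-term and the linear $\omega$-term combine into the single quantity $\zeta$ after the Dieudonn\'{e} substitution — this cancellation is what makes the estimate sharp rather than merely an inequality; and (b) the one-variable optimization with its case split, in particular recognizing that the interior critical point $\rho=r^2/k$ lies in $[0,r]$ exactly when $r\le\lambda$, and that $\lambda\ge 1$ is equivalent to the hypothesis $\sin^2\alpha+\beta^2\cos^2\alpha\le 1/4$ of part (i). Constructing the explicit extremal Blaschke product for $|z|<\lambda$ is the other place where some care will be needed.
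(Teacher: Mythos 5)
Your proposal is correct and follows essentially the same route as the paper: the same subordination formula for $S_f$ in terms of a Schwarz function, the same application of Dieudonn\'e's lemma plus the triangle inequality, and the same one-variable maximization with interior critical point $\rho=r^2/k$ (the paper's $s_0(|z|)$) and the case split at $|z|=\lambda$. The only differences are notational ($\mu$, $k$ in place of $|A-1|$ and the function $h$), and your sharpness discussion via the degree-$2$ Blaschke product and $\omega(z)=z$ matches the paper's.
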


\begin{proof}
For $-\pi/2<\alpha<\pi/2$ and  $0\le \beta<1$, let $f\in\mathcal{S}_{\alpha}(\beta)$. Then from \eqref{t-00010}, we have
$$1+\frac{zf''(z)}{f'(z)}\prec \frac{1+Az}{1-z},$$
where
$$A=e^{-i\alpha}(e^{-i\alpha}-2\beta\cos{\alpha}).$$
Thus, there exists an analytic function $\omega:\mathbb{D}\rightarrow\mathbb{D}$ with $\omega(0)=0$ such that
$$1+\frac{zf''(z)}{f'(z)}= \frac{1+A\omega(z)}{1-\omega(z)}.$$
A simple computation gives
\begin{equation}\label{t-00030}
\frac{f''(z)}{f'(z)}=\frac{(A+1)\omega(z)}{z(1-\omega(z))},
\end{equation}
and therefore,
\begin{align*}
S_f(z)&=\left(\frac{f''(z)}{f'(z)}\right)^{'}-\frac{1}{2}\left(\frac{f''(z)}{f'(z)}\right)^2\\[3mm]
&=(A+1)\left\{\dfrac{\omega'(z)-\dfrac{\omega(z)}{z}}{z(1-\omega(z))^2}-\dfrac{(A-1)\omega^2(z)}{2z^2(1-\omega(z))^2}\right\}.\nonumber
\end{align*}
Moreover, the Schwarzian derivative $S_f(z)$ can be written in the following form
\begin{equation}\label{t-00035}
S_f(z)=(A+1)\left\{\dfrac{\omega'(z)-\dfrac{\omega(z)}{z}}{z(1-\omega(z))^2}-\dfrac{|A-1|e^{i\theta}\omega^2(z)}{2z^2(1-\omega(z))^2}\right\}, \quad \theta=Arg(A-1).
\end{equation}
Let us consider the transformation $\displaystyle e^{i\theta}\zeta(z)=\omega'(z)-\frac{\omega(z)}{z}$. By Dieudonn\'{e}'s Lemma \ref{lem-001}, the function $\zeta(z)$ varies over the closed disk
$$|\zeta(z)|\le \frac{|z|^2-|\omega(z)|^2}{|z|(1-|z|^2)}\quad\text{for fixed }|z|<1.$$
Using the transformation $\zeta(z)$ in \eqref{t-00035}, we obtain
\begin{equation}\label{t-00038}
S_f(z)=e^{i\theta}(A+1)\left\{\frac{\zeta(z)}{z(1-\omega(z))^2}-\frac{|A-1|\omega^2(z)}{2z^2(1-\omega(z))^2}\right\},
\end{equation}
and therefore,
\begin{align*}
|S_f(z)|&\le |A+1|\left\{\frac{|\zeta(z)|}{|z||1-\omega(z)|^2}+\frac{|A-1||\omega(z)|^2}{|z|^2|1-\omega(z)|^2}\right\}\\[3mm]
&\le |A+1|\left\{\frac{|z|^2-|\omega(z)|^2}{|z|^2(1-|z|^2)(1-|\omega(z)|)^2}+\frac{|A-1||\omega(z)|^2}{|z|^2(1-|\omega(z)|)^2}\right\}.
\end{align*}
For $0\le s:=|\omega(z)|\le |z|<1$, we obtain
\begin{align}\label{t-00040}
|S_f(z)|&\le |A+1|\left\{\frac{|z|^2-s^2}{|z|^2(1-|z|^2)(1-s)^2}+\frac{|A-1|s^2}{|z|^2(1-s)^2}\right\}\\[3mm]
&=|A+1|\left\{\frac{2|z|^2-s^2(2-|A-1|(1-|z|^2))}{2|z|^2(1-|z|^2)(1-s)^2}\right\}=|A+1|g(s),\nonumber
\end{align}
where
\begin{equation}\label{t-00045}
g(s)=\frac{2|z|^2-s^2(2-|A-1|(1-|z|^2))}{2|z|^2(1-|z|^2)(1-s)^2}, \quad 0\le s\le |z|<1.
\end{equation}
Now, we want to determine the maximum value of $g(s)$ in $[0,|z|]$. To do this, we have to first identify the critical values of $g(s)$ in $(0,|z|)$. Therefore,
\begin{equation*}
g'(s)=\frac{2|z|^2-s(2-|A-1|(1-|z|^2))}{|z|^2(1-|z|^2)(1-s)^3},
\end{equation*}
and so, $g'(s)=0$ gives
$$s=\frac{2|z|^2}{2-|A-1|(1-|z|^2)}=:s_0(|z|).$$
Now, we have to check for what values of $\alpha$ and $\beta$ the point $s_0(|z|)$ lies in $(0,|z|)$. We note that the inequality
\begin{equation}\label{t-00050}
s_0(|z|)=\frac{2|z|^2}{2-|A-1|(1-|z|^2)}<|z|,
\end{equation}
holds if and only if $h(|z|)>0$, where
\begin{equation}\label{t-00053}
h(t)=2-|A-1|(1+t)>0.
\end{equation}
If $A=1$ then it is clear that $h(|z|)>0$ and consequently, $s_0(|z|)$ lies in $(0,|z|)$. If $A\ne 1$, then $h(t)$ has unique zero on the positive real axis, say $\lambda$, it is given by 
$$\lambda:=\frac{2-|A-1|}{|A-1|}=\dfrac{1-\sqrt{\sin^2{\alpha}+\beta^2\cos^2{\alpha}}}{\sqrt{\sin^2{\alpha}+\beta^2\cos^2{\alpha}}}.$$
It is easy to see that $\lambda$ lies in $(0,1)$ if and only if $\sin^2{\alpha}+\beta^2\cos^2{\alpha}>1/4$. Now, we complete the proof by considering two different cases.\\

\noindent\textbf{Case-I:} Let $\sin^2{\alpha}+\beta^2\cos^2{\alpha}\le 1/4$. In this case, $h(|z|)>0$ in $(0,1)$ and so, $s_0(|z|)$ lies in $(0,|z|)$. Since, the numerator of $g'(s)$ is a linear function of $s$, and $g'(0)=2/(1-|z|)^2>0$, $g'(s_0(|z|))=0$, it follows that $g'(|z|)<0$. Therefore, the function $g$ is strictly increasing in $(0,s_0(|z|))$ and strictly decreasing in $(s_0(|z|),|z|)$ and therefore, $g$ attain its maximum at $s_0(|z|)$. Therefore, from \eqref{t-00040}, we get
\begin{align*}
|S_f(z)|&\le \dfrac{|A+1|(2-|A-1|(1-|z|^2))}{(1-|z|^2)^2(2-|A-1|)}\\[2mm]
&=\dfrac{2(1-\beta)\cos{\alpha}\{1-(1-|z|^2)\sqrt{\sin^2{\alpha}+\beta^2\cos^2{\alpha}}\}}{(1-|z|^2)^2(1-\sqrt{\sin^2{\alpha}+\beta^2\cos^2{\alpha}})}\quad\text{for}~z\in\mathbb{D}.
\end{align*}

\noindent\textbf{Case-II:} Let $\sin^2{\alpha}+\beta^2\cos^2{\alpha}>1/4$. In this case $h(|z|)>0$ for $|z|<\lambda$ and $h(|z|)\le 0$ for $|z|\ge\lambda$. Thus, for $|z|<\lambda$, following the same argument as in Case-I, the function $g$ is strictly increasing in $(0,s_0(|z|))$ and strictly decreasing in $(s_0(|z|),|z|)$ and therefore, $g$ attain its maximum at $s_0(|z|)$. Thus, the first inequality of \eqref{t-00020} follows from \eqref{t-00040} and \eqref{t-00045}. Since $h(|z|)\le 0$ for $\lambda\le |z|<1$, it follows that $|A-1|(1-|z|^2)\ge 2(1-|z|)$. Thus, for $|z|\ge \lambda$,  from \eqref{t-00045}, we obtain
 $$g(|z|)=\frac{|A-1|}{2(1-|z|)^2} = \frac{|A-1|(1-|z|^2)}{2(1-|z|)^2}g(0) \ge \frac{g(0)}{1-|z|}\ge g(0).$$
Thus, the second inequality of \eqref{t-00020} follows from \eqref{t-00040} and \eqref{t-00045}.\\

\end{proof}

Before proceed further, we will discuss the sharpness of the estimate $|S_f(z)|$ given in Theorem \ref{Thm-t-0005}. First, we will prove the sharpness of $|S_f(z)|$ given in \eqref{t-00015} and the first inequality of \eqref{t-00020} for real values of $z$. Let us consider the function $f_{z_0}$ with $-1<z_0<1$ that satisfy the following relation
\begin{equation}\label{t-00060}
1+\frac{zf_{z_0}''(z)}{f_{z_0}'(z)}= \frac{1+A\phi(z)}{1-\phi(z)},
\end{equation}
where 
$$A=e^{-i\alpha}(e^{-i\alpha}-2\beta\cos{\alpha})\quad\text{with}\quad \phi(z)=-\frac{z(z-b)}{1-bz}$$ 
and $b$ is a solution of the equation
\begin{align}\label{t-00065}
&-\frac{z_0(z_0-b)}{1-bz_0}=\frac{2z_0^2}{2-|A-1|(1-z_0^2)}\nonumber\\[3mm]
&i.e.,\quad b=\frac{z_0(4-|A-1|(1-z_0^2))}{2(1+z_0^2)-|A-1|(1-z_0^2)}.
\end{align}
It is well known that whenever $\alpha$, $\beta$ and $z_0$ satisfies the conditions of  $|S_f(z)|$ given in \eqref{t-00015} and the first inequality in \eqref{t-00020}, the point $s_0(|z_0|)$ lies in $[0,|z|)$, where $s_0(|z_0|)$ is given by \eqref{t-00050}. This lead us to conclude that $b\in(-1,1)$ and consequently, $\phi(z)$ is Blaschke product of degree $2$ fixing $0$. Thus, the function $f_{z_0}$ belongs to $\mathcal{S}_{\alpha}(\beta)$. Thus, from \eqref{t-00038}, the Schwarzian derivative of $f_{z_0}$ is given by
\begin{align*}
S_{f_{z_0}}(z)&=\left(\frac{f''_{z_0}(z)}{f'_{z_0}(z)}\right)^{'}-\frac{1}{2}\left(\frac{f''_{z_0}(z)}{f'_{z_0}(z)}\right)^2\\[2mm]
&=e^{i\theta}(A+1)\left\{\frac{\zeta(z)}{z(1-\phi(z))^2}-\frac{|A-1|\phi^2(z)}{2z^2(1-\phi(z))^2}\right\},
\end{align*}
where 
$$\zeta(z)=e^{-i\theta}\left(\phi'(z)-\frac{\phi(z)}{z}\right) \quad \text{and}\quad \theta=Arg(A-1).$$
Therefore, the Schwarzian derivative $S_{f_{z_0}}(z)$ at $z_0$ is given by
\begin{align*}
S_{f_{z_0}}(z_0)&=e^{i\theta}(A+1)\left\{\frac{\zeta(z_0)}{z_0(1-\phi(z_0))^2}-\frac{|A-1|\phi^2(z_0)}{2z^2(1-\phi(z_0))^2}\right\}\\[2mm]
&=e^{i\theta}(A+1)\left\{\frac{-2(1-b^2)-|A-1|(z_0-b)^2}{2(1-bz_0+z_0(z_0-b))^2}\right\}.
\end{align*}
Substituting the value of $b$ given in \eqref{t-00065} and calculating the Schwarzian derivative $f_{z_0}(z_0)$, we obtain
$$S_{f_{z_0}}(z_0)=\dfrac{e^{i\theta}(A+1)(2-|A-1|(1-z_0^2))}{(1-z_0^2)^2(2-|A-1|)}.$$
Therefore,
\begin{align}\label{t-00070}
|S_{f_{z_0}}(z_0)|&= \dfrac{|A+1|(2-|A-1|(1-|z_0|^2))}{(1-|z_0|^2)^2(2-|A-1|)}\nonumber\\[2mm]
&=\dfrac{2(1-\beta)\cos{\alpha}\{1-(1-|z_0|^2)\sqrt{\sin^2{\alpha}+\beta^2\cos^2{\alpha}}\}}{(1-|z_0|^2)^2(1-\sqrt{\sin^2{\alpha}+\beta^2\cos^2{\alpha}})}.
\end{align}
This shows that the bounds obtained in the inequality \eqref{t-00015} and the first inequality in \eqref{t-00020} are sharp for real $z$.\\

Now, we will discuss the sharpness of the second inequality of \eqref{t-00020}. To do this, let us consider the function $f_0(z)$ given by
\begin{equation}\label{t-00075}
1+\frac{zf_0''(z)}{f_0'(z)}= \frac{1+Az}{1-z},
\end{equation}
where 
$$A=e^{-i\alpha}(e^{-i\alpha}-2\beta\cos{\alpha}).$$ 
Using \eqref{t-00038} with $\omega(z)=z$, the Schwarzian derivative of $f_0(z)$ is given by
$$S_{f_0}(z)=-\frac{e^{i\theta}(A+1)|A-1|}{2(1-z)^2},\quad \theta=Arg(A-1).$$
Therefore, for any $\alpha$ and $\beta$ with $\sin^2{\alpha}+\beta^2\cos^2{\alpha}>1/4$, and $|z_0|\ge\lambda$ with $0< z_0< 1$, we have
$$|S_{f_0}(z_0)|=\frac{|A^2-1|}{2(1-|z_0|)^2}=\dfrac{2(1-\beta)\cos{\alpha}\sqrt{\sin^2{\alpha}+\beta^2\cos^2{\alpha}}}{(1-|z_0|)^2},$$
which shows that the bound obtained in the second inequality in \eqref{t-00020} is sharp for positive $z$.\\

The previous discussion proves that for certain real values of $z$, the estimate of the modulus of the Schwarzian derivative $|S_f(z)|$ given in Theorem \ref{Thm-t-0005} is sharp. This also allows us to comment on the estimate of the Schwarzian norm $||S_f||$ is sharp for functions in $\mathcal{S}_{\alpha}(\beta)$.

\begin{thm}\label{Thm-t-00010}
For $-\pi/2<\alpha<\pi/2$ and  $0\le \beta<1$, let $f$ be in the class $\mathcal{S}_{\alpha}(\beta)$. Then the Schwarzian norm $||S_f||$ satisfies the following inequality
$$||S_f||\le
\begin{cases}
\dfrac{2(1-\beta)\cos{\alpha}}{1-\sqrt{\sin^2{\alpha}+\beta^2\cos^2{\alpha}}}&~\text{for}~\sin^2{\alpha}+\beta^2\cos^2{\alpha}\le 1/4,\\[5mm]
8(1-\beta)\cos{\alpha}\sqrt{\sin^2{\alpha}+\beta^2\cos^2{\alpha}} &~\text{for}~\sin^2{\alpha}+\beta^2\cos^2{\alpha}> 1/4.
\end{cases}
$$
Moreover, the estimates are best possible.
\end{thm}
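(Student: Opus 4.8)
The plan is to take the supremum over $z \in \mathbb{D}$ of the pointwise bounds on $|S_f(z)|$ already established in Theorem~\ref{Thm-t-0005}. The quantity $(1-|z|^2)^2 |S_f(z)|$ is controlled, in each of the two regimes determined by whether $\sin^2\alpha + \beta^2\cos^2\alpha$ is at most or greater than $1/4$, by explicit functions of $r = |z| \in [0,1)$, so the task reduces to a pair of single-variable maximization problems. Writing $\kappa := \sqrt{\sin^2\alpha + \beta^2\cos^2\alpha}$ for brevity, I would carry out the following steps.

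First, suppose $\kappa^2 \le 1/4$. Then \eqref{t-00015} gives
\[
(1-|z|^2)^2 |S_f(z)| \le \frac{2(1-\beta)\cos\alpha\,\{1 - (1-r^2)\kappa\}}{1-\kappa}, \qquad r = |z|.
\]
The right-hand side is increasing in $r^2$ (the factor $1 - (1-r^2)\kappa$ increases as $r \to 1^-$, since $\kappa \ge 0$), so its supremum over $r \in [0,1)$ is attained in the limit $r \to 1^-$, yielding the value $2(1-\beta)\cos\alpha/(1-\kappa)$. Combined with the sharpness already demonstrated via the functions $f_{z_0}$ in \eqref{t-00070} — letting $z_0 \to 1^-$ there makes $|S_{f_{z_0}}(z_0)|(1-z_0^2)^2$ approach exactly this bound — this settles the first case, including that the estimate is best possible.

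Second, suppose $\kappa^2 > 1/4$, and set $\lambda = (1-\kappa)/\kappa \in (0,1)$ as in \eqref{t-00025}. For $r < \lambda$ the same computation as above gives $(1-r^2)^2|S_f(z)|$ bounded by the first branch of \eqref{t-00020}, which on $[0,\lambda)$ is again increasing in $r$ and hence at most its value at $r = \lambda$; a direct substitution of $r = \lambda = (1-\kappa)/\kappa$ shows $1 - (1-\lambda^2)\kappa = \cdots$, and the whole expression simplifies to $8(1-\beta)\cos\alpha\,\kappa$. For $r \ge \lambda$, the second branch of \eqref{t-00020} gives
\[
(1-r^2)^2 |S_f(z)| \le 2(1-\beta)\cos\alpha\,\kappa\,\frac{(1-r^2)^2}{(1-r)^2} = 2(1-\beta)\cos\alpha\,\kappa\,(1+r)^2,
\]
which is increasing in $r$ on $[\lambda,1)$ and therefore at most its limiting value $8(1-\beta)\cos\alpha\,\kappa$ as $r \to 1^-$. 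Thus both branches are dominated by $8(1-\beta)\cos\alpha\,\kappa$, establishing the bound. Sharpness follows from the extremal function $f_0$ of \eqref{t-00075}: there $(1-|z_0|^2)^2|S_{f_0}(z_0)| = 2(1-\beta)\cos\alpha\,\kappa\,(1+|z_0|)^2 \to 8(1-\beta)\cos\alpha\,\kappa$ as $z_0 \to 1^-$.

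The argument is essentially routine once Theorem~\ref{Thm-t-0005} is in hand; the only place demanding care is the algebraic simplification at $r = \lambda$ in the second case — one must verify that the first branch of \eqref{t-00020} evaluated at the transition point $\lambda$ agrees with the limiting value of the second branch, so that the supremum is genuinely $8(1-\beta)\cos\alpha\,\kappa$ and the piecewise bound has no jump that would raise the maximum. I would double-check this by substituting $1 - \lambda^2 = (2\kappa - 1)/\kappa^2$ and confirming the numerator $1 - (1-\lambda^2)\kappa$ reduces to $(1-\kappa)^2/\kappa + \text{(correction)}$ matching $4\kappa^2(1-\kappa)/\kappa = 4\kappa(1-\kappa)$ after clearing the denominator $1-\kappa$; this is the one computation I would not leave implicit.
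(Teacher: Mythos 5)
Your overall strategy is exactly the paper's: split according to $\kappa^2\le 1/4$ or $\kappa^2>1/4$, bound $(1-|z|^2)^2|S_f(z)|$ branchwise using Theorem~\ref{Thm-t-0005}, and get sharpness from $f_{z_0}$ (resp.\ $f_0$) as $z_0\to 1^-$. The first case and the treatment of the second branch in the second case are fine. However, the computation you single out as ``the one computation I would not leave implicit'' is wrong as stated. With $\lambda=(1-\kappa)/\kappa$ one has $1-\lambda^2=(2\kappa-1)/\kappa^2$, hence
\[
1-(1-\lambda^2)\kappa=1-\frac{2\kappa-1}{\kappa}=\frac{1-\kappa}{\kappa},
\]
so the supremum of the first branch of the normalized bound over $[0,\lambda)$ is
\[
\frac{2(1-\beta)\cos\alpha}{1-\kappa}\cdot\frac{1-\kappa}{\kappa}=\frac{2(1-\beta)\cos\alpha}{\kappa},
\]
\emph{not} $8(1-\beta)\cos\alpha\,\kappa$. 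The two quantities coincide only when $\kappa=1/2$. Your proposed verification at the end (matching the numerator to $4\kappa(1-\kappa)$) would therefore not check out. Likewise, the two branches of $(1-r^2)^2$ times the pointwise bound do agree at $r=\lambda$, but their common value there is $2(1-\beta)\cos\alpha/\kappa$; the value $8(1-\beta)\cos\alpha\,\kappa$ is only the limit of the second branch as $r\to 1^-$.

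The slip is harmless for the theorem because $2/\kappa<8\kappa$ precisely when $\kappa^2>1/4$, which is the standing hypothesis of the second case; so the first branch's contribution is strictly dominated by the second branch's supremum and the stated bound survives. But this domination is a fact that must be checked, not an identity: the paper does exactly this, computing $M_1=|A^2-1|/(2\kappa^2)$ and $M_2=2|A^2-1|$ and observing $M_1<M_2$ because $\kappa^2>1/4$. Replace your ``simplifies to $8(1-\beta)\cos\alpha\,\kappa$'' claim by the correct value $2(1-\beta)\cos\alpha/\kappa$ together with the inequality $2/\kappa\le 8\kappa$, and the argument is complete and identical in substance to the paper's.
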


\begin{proof}
We prove the theorem by considering two different cases.\\

\noindent \textbf{Case-I: } Let $\sin^2{\alpha}+\beta^2\cos^2{\alpha}\le 1/4$. Then from \eqref{t-00015}, the Schwarzian derivative of $f$ satisfies the following inequality
$$|S_f(z)|\le \dfrac{2(1-\beta)\cos{\alpha}\{1-(1-|z|^2)\sqrt{\sin^2{\alpha}+\beta^2\cos^2{\alpha}}\}}{(1-|z|^2)^2(1-\sqrt{\sin^2{\alpha}+\beta^2\cos^2{\alpha}})}\quad\text{for}~z\in\mathbb{D},$$
Therefore,
\begin{align*}
||S_f||&=\sup\limits_{z\in\mathbb{D}}(1-|z|^2)^2|S_f(z)|\\
&\le \dfrac{2(1-\beta)\cos{\alpha}}{1-\sqrt{\sin^2{\alpha}+\beta^2\cos^2{\alpha}}}\sup\limits_{0\le |z|<1}\left\{1-(1-|z|^2)\sqrt{\sin^2{\alpha}+\beta^2\cos^2{\alpha}}\right\}\\
&=\dfrac{2(1-\beta)\cos{\alpha}}{1-\sqrt{\sin^2{\alpha}+\beta^2\cos^2{\alpha}}}.
\end{align*}
To show that the estimate is best possible, consider the function $f_{z_0}(z)\in\mathcal{S}_{\alpha}(\beta)$ with $-1<z_0<1$ given in \eqref{t-00060}. From \eqref{t-00070}, we obtain
\begin{align*}
(1-|z_0|^2)^2|S_{f_{z_0}}(z_0)|&=\dfrac{2(1-\beta)\cos{\alpha}\{1-(1-|z_0|^2)\sqrt{\sin^2{\alpha}+\beta^2\cos^2{\alpha}}\}}{1-\sqrt{\sin^2{\alpha}+\beta^2\cos^2{\alpha}}}\\
&\rightarrow \dfrac{2(1-\beta)\cos{\alpha}}{1-\sqrt{\sin^2{\alpha}+\beta^2\cos^2{\alpha}}}\quad\text{as }z_0\rightarrow 1^-.
\end{align*}
This shows that the estimate is best possible.\\

\noindent \textbf{Case-II: } Let $\sin^2{\alpha}+\beta^2\cos^2{\alpha}> 1/4$. Then from \eqref{t-00020}, the Schwarzian derivative of $f$ satisfies the following inequality
\begin{equation*}
|S_f(z)|\le
\begin{cases}
\dfrac{2(1-\beta)\cos{\alpha}\{1-(1-|z|^2)\sqrt{\sin^2{\alpha}+\beta^2\cos^2{\alpha}}\}}{(1-|z|^2)^2(1-\sqrt{\sin^2{\alpha}+\beta^2\cos^2{\alpha}})}& ~\text{for}~|z|<\lambda,\\[6mm]
\dfrac{2(1-\beta)\cos{\alpha}\sqrt{\sin^2{\alpha}+\beta^2\cos^2{\alpha}}}{(1-|z|)^2}&~ \text{for}~|z|\ge\lambda,
\end{cases}
\end{equation*}
where $\lambda$ is given by \eqref{t-00025}. Therefore,
\begin{equation}\label{t-00080}
||S_f||=\sup\limits_{z\in\mathbb{D}}(1-|z|^2)^2|S_f(z)|\le \max\{M_1,M_2\},
\end{equation}
where
\begin{align*}
M_1&=2(1-\beta)\cos{\alpha}\sup\limits_{0\le |z|<\lambda}\dfrac{1-(1-|z|^2)\sqrt{\sin^2{\alpha}+\beta^2\cos^2{\alpha}}}{1-\sqrt{\sin^2{\alpha}+\beta^2\cos^2{\alpha}}}\\[2mm]
&=\dfrac{2(1-\beta)\cos{\alpha}\{1-(1-\lambda^2)\sqrt{\sin^2{\alpha}+\beta^2\cos^2{\alpha}}\}}{1-\sqrt{\sin^2{\alpha}+\beta^2\cos^2{\alpha}}},
\end{align*}
and
\begin{align*}
M_2&=2(1-\beta)\cos{\alpha}\sqrt{\sin^2{\alpha}+\beta^2\cos^2{\alpha}}\sup\limits_{\lambda\le|z|<1}(1+|z|)^2\\
&=8(1-\beta)\cos{\alpha}\sqrt{\sin^2{\alpha}+\beta^2\cos^2{\alpha}}.
\end{align*}
Since $\lambda$ is a root of the equation $h(t)=0$, and therefore, $2-|A-1|(1-\lambda^2)=2\lambda$, where $A=e^{-i\alpha}(e^{-i\alpha}-2\beta\cos{\alpha})$. Using this fact, $M_1$ can be written as 
$$M_1=\frac{2\lambda|A+1|}{2-|A-1|}=\frac{2|A^2-1|}{|A-1|^2}=\frac{|A^2-1|}{2(\sin^2{\alpha}+\beta^2\cos^2{\alpha})}<2|A^2-1|=M_2.$$
Thus, the required result follows from \eqref{t-00080}.\\

Now, we show that the estimate is sharp. Let us consider the function $f_0\in\mathcal{S}_{\alpha}$ given in \eqref{t-00075}. Then the Schwarzian derivative of $f_0$ is given by
$$S_{f_0}(z)=-\frac{e^{i\theta}(A+1)|A-1|}{2(1-z)^2},\quad \theta=Arg(A-1).$$
Therefore,
$$||S_{f_0}||=\sup\limits_{z\in\mathbb{D}}(1-|z|^2)^2|S_{f_0}(z)|=\frac{|A^2-1|}{2}\sup\limits_{z\in\mathbb{D}}\frac{(1-|z|^2)^2}{|1-z|^2}.$$
On the positive real axis, we have
$$\sup\limits_{0\le t<1}\frac{(1-t^2)^2}{(1-t)^2}=4.$$
Thus,
$$||S_{f_0}||=2|A^2-1|=8(1-\beta)\cos{\alpha}\sqrt{\sin^2{\alpha}+\beta^2\cos^2{\alpha}}.$$
\end{proof}

For particular values $\alpha=\beta=0$, one can obtain the Schwarzian norm for functions in the class $\mathcal{C}$, which was first proved by Robertson \cite{Robertson-1969}.

\begin{cor}
If $f\in\mathcal{S}_0(0)=:\mathcal{C}$, then the Schwarzian norm satisfies the sharp inequality $||S_f||\le 2$.
\end{cor}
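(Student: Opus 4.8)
The plan is to deduce this corollary directly from Theorem~\ref{Thm-t-00010} by specializing to $\alpha=\beta=0$, so the argument will be short. The first step is to confirm that $\mathcal{S}_0(0)$ really coincides with the classical convex class $\mathcal{C}$: inserting $\alpha=0$ and $\beta=0$ into the defining inequality of $\mathcal{S}_\alpha(\beta)$ collapses it to ${\rm Re\,}\bigl(1+zf''(z)/f'(z)\bigr)>0$ on $\mathbb{D}$, which is exactly the analytic characterization of normalized convex functions recalled in the Introduction. Hence $f\in\mathcal{C}$ is the same as $f\in\mathcal{S}_0(0)$, and Theorem~\ref{Thm-t-00010} is applicable.

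Next I would feed $\alpha=\beta=0$ into the statement of Theorem~\ref{Thm-t-00010}. Since $\sin^2 0+0^2\cdot\cos^2 0=0\le 1/4$, the first branch is the relevant one, and it returns
$$||S_f||\le\frac{2(1-0)\cos 0}{1-\sqrt{0}}=2,$$
which is the asserted bound. This step is purely mechanical.

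For sharpness I would revisit the extremal functions built in the proof of Theorem~\ref{Thm-t-00010}. Here the parameter in \eqref{t-00010} is $A=e^{-i\cdot 0}\bigl(e^{-i\cdot 0}-2\cdot 0\cdot\cos 0\bigr)=1$, and there is a small wrinkle to flag: the single-function candidate $f_0$ of \eqref{t-00075} degenerates when $A=1$, because $|A-1|=0$ forces $S_{f_0}\equiv 0$, so it certifies nothing. The remedy is to use instead the two-parameter Blaschke extremal $f_{z_0}$ of \eqref{t-00060}: with $A=1$ the quantity in \eqref{t-00065} becomes $b=2z_0/(1+z_0^2)\in(-1,1)$ for each $z_0\in(-1,1)$, so $\phi$ is a genuine degree-$2$ Blaschke product fixing $0$ and $f_{z_0}\in\mathcal{S}_0(0)=\mathcal{C}$; substituting $\sqrt{\sin^2\alpha+\beta^2\cos^2\alpha}=0$ into the sharpness identity \eqref{t-00070} then gives $(1-|z_0|^2)^2|S_{f_{z_0}}(z_0)|=2$ for every such $z_0$, whence $||S_{f_{z_0}}||=2$ and the bound is attained. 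As an alternative one may simply exhibit the explicit strip map $k(z)=\tfrac12\log\frac{1+z}{1-z}\in\mathcal{C}$, for which a short computation yields $S_k(z)=2/(1-z^2)^2$ and hence $\sup_{z\in\mathbb{D}}(1-|z|^2)^2|S_k(z)|=2$, the supremum being realized along the real diameter.

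There is no genuine obstacle here: the whole argument is a substitution followed by a routine verification. The only point deserving explicit attention — and which I would call out in the write-up — is the degeneration of the naive extremal $f_0$ at $A=1$, which makes it necessary to pass to the Blaschke family $f_{z_0}$ (or to an independent explicit example such as $k$) in order to certify that the constant $2$ cannot be lowered.
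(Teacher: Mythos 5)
Your proposal is correct and follows essentially the same route as the paper: the corollary is obtained by substituting $\alpha=\beta=0$ into Theorem~\ref{Thm-t-00010}, and sharpness in the relevant case ($\sin^2\alpha+\beta^2\cos^2\alpha\le 1/4$) is certified there by the same Blaschke-product family $f_{z_0}$ of \eqref{t-00060} that you invoke. Your remark that the single extremal $f_0$ of \eqref{t-00075} degenerates at $A=1$ is accurate and worth noting, but it does not change the argument, since the paper's Case-I sharpness already relies on $f_{z_0}$ rather than $f_0$.
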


In particular, $\beta=0$, one can obtain the Schwarzian norm for functions in Robertson class $\mathcal{S}_{\alpha}$, which was obtained by Ali and Pal \cite{Ali-Sanjit-2023c}.

\begin{cor}
If $f\in\mathcal{S}_{\alpha}(0)=:\mathcal{S}_{\alpha}$, then the Schwarzian norm satisfies the sharp inequality 
$$||S_f||\le
\begin{cases}
\dfrac{2\cos{\alpha}}{1-\sin{|\alpha|}}&~\text{for}~|\alpha|\le \frac{\pi}{6},\\[5mm]
8\cos{\alpha}\sin{|\alpha|} &~\text{for}~|\alpha|>\frac{\pi}{6}.
\end{cases}
$$
\end{cor}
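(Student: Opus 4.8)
The plan is to specialize Theorem~\ref{Thm-t-00010} to the case $\beta=0$, since by definition $\mathcal{S}_{\alpha}(0)=\mathcal{S}_{\alpha}$; the only point needing verification is that the trigonometric quantities appearing in Theorem~\ref{Thm-t-00010} collapse to the stated form. With $\beta=0$ one has $\sin^2{\alpha}+\beta^2\cos^2{\alpha}=\sin^2{\alpha}$, hence $\sqrt{\sin^2{\alpha}+\beta^2\cos^2{\alpha}}=|\sin{\alpha}|=\sin{|\alpha|}$ (using $-\pi/2<\alpha<\pi/2$). Moreover, since $\sin$ is strictly increasing on $(-\pi/2,\pi/2)$ with $\sin(\pi/6)=1/2$, the dichotomy $\sin^2{\alpha}+\beta^2\cos^2{\alpha}\le 1/4$ versus $>1/4$ in Theorem~\ref{Thm-t-00010} becomes precisely $|\alpha|\le\pi/6$ versus $|\alpha|>\pi/6$.

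Next I would substitute $\beta=0$ into the two branches of the estimate in Theorem~\ref{Thm-t-00010}. The first branch yields
$$||S_f||\le\frac{2(1-0)\cos{\alpha}}{1-\sin{|\alpha|}}=\frac{2\cos{\alpha}}{1-\sin{|\alpha|}}\qquad\text{for }|\alpha|\le\pi/6,$$
and the second yields $||S_f||\le 8(1-0)\cos{\alpha}\,\sin{|\alpha|}=8\cos{\alpha}\sin{|\alpha|}$ for $|\alpha|>\pi/6$, which is exactly the claimed inequality.

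Finally, sharpness needs no fresh argument: the extremal functions constructed in the proof of Theorem~\ref{Thm-t-00010} --- the degree-two Blaschke-product map $f_{z_0}$ for the first regime and $f_0$ (the solution of \eqref{t-00075} with $\omega(z)=z$) for the second --- already belong to $\mathcal{S}_{\alpha}(\beta)$, and for $\beta=0$ they belong to $\mathcal{S}_{\alpha}$ with $A=e^{-2i\alpha}$; the same limiting evaluations used there (letting $z_0\to 1^-$ for the first bound, and taking $z$ along the positive real axis for the second) show both bounds are attained. There is no genuine obstacle here: the corollary is a straightforward specialization, the only ``work'' being the bookkeeping identities $\sqrt{\sin^2{\alpha}}=\sin{|\alpha|}$ and $\sin^2{\alpha}\le 1/4\Leftrightarrow|\alpha|\le\pi/6$.
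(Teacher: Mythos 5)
Your proposal is correct and is exactly the paper's (implicit) argument: the corollary is stated as a direct specialization of Theorem~\ref{Thm-t-00010} to $\beta=0$, and your bookkeeping identities $\sqrt{\sin^2\alpha}=\sin|\alpha|$ and $\sin^2\alpha\le 1/4\Leftrightarrow|\alpha|\le\pi/6$ are precisely what that specialization requires. The sharpness remark, inherited from the extremal functions $f_{z_0}$ and $f_0$ of Theorem~\ref{Thm-t-00010} with $A=e^{-2i\alpha}$, is likewise how the paper treats it.
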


Now, we will focus on the pre-Schwarzian norm for functions in the class $\mathcal{S}_{\alpha}(\beta)$.

\begin{thm}\label{Thm-t-00015}
For $-\pi/2<\alpha<\pi/2$ and $0\le \beta<1$, let $f\in\mathcal{S}_{\alpha}(\beta)$. Then the pre-Schwarzian norm satisfies the following sharp inequality
$$||P_f||\le 4(1-\beta)\cos{\alpha}.$$
\end{thm}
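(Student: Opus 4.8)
The plan is to exploit the subordination characterization \eqref{t-00010} exactly as in the proof of Theorem \ref{Thm-t-0005}, but with the key observation that the pre-Schwarzian derivative—unlike the Schwarzian—does not involve $\omega'$, so only Schwarz's lemma is needed (Dieudonn\'e's lemma, and hence the delicate maximization in the proof of Theorem \ref{Thm-t-0005}, is not required). First I would fix $f\in\mathcal{S}_{\alpha}(\beta)$ and pick a Schwarz function $\omega\in\mathcal{B}_0$ with $1+zf''(z)/f'(z)=(1+A\omega(z))/(1-\omega(z))$, where $A=e^{-i\alpha}(e^{-i\alpha}-2\beta\cos{\alpha})$. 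Then \eqref{t-00030} gives
\[
P_f(z)=\frac{f''(z)}{f'(z)}=\frac{(A+1)\omega(z)}{z(1-\omega(z))}.
\]

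Next I would estimate directly. By Schwarz's lemma, $s:=|\omega(z)|\le|z|$ and $|1-\omega(z)|\ge 1-|\omega(z)|=1-s$, whence
\[
(1-|z|^2)|P_f(z)|\le |A+1|\,\frac{(1-|z|^2)\,s}{|z|\,(1-s)}.
\]
Since $s\mapsto s/(1-s)$ is increasing on $[0,|z|]$, the right-hand side is largest at $s=|z|$, which yields $(1-|z|^2)|P_f(z)|\le |A+1|(1+|z|)<2|A+1|$. A short computation using $e^{-2i\alpha}+1=2\cos{\alpha}\,e^{-i\alpha}$ gives $A+1=2(1-\beta)\cos{\alpha}\,e^{-i\alpha}$, so that $|A+1|=2(1-\beta)\cos{\alpha}$ (here $\cos\alpha>0$ and $1-\beta>0$). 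Taking the supremum over $z\in\mathbb{D}$ then gives $\|P_f\|\le 2|A+1|=4(1-\beta)\cos{\alpha}$.

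For sharpness I would take the function $f_0$ of \eqref{t-00075}, i.e.\ the one corresponding to $\omega(z)=z$, so that $P_{f_0}(z)=(A+1)/(1-z)$. Restricting to $z=t\in(0,1)$ and using $\sup_{0\le t<1}(1-t^2)/(1-t)=2$ shows $\|P_{f_0}\|=2|A+1|=4(1-\beta)\cos{\alpha}$, so the bound cannot be improved.

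There is no serious obstacle here: the whole argument is a one-line application of Schwarz's lemma combined with the monotonicity of $s/(1-s)$. The only points requiring a little care are the trigonometric simplification $|A+1|=2(1-\beta)\cos{\alpha}$, and the fact that the supremum defining $\|P_f\|$ is approached but not attained, so that the extremality must be exhibited via a sequence $z=t\to1^-$ for $f_0$ rather than at a single point.
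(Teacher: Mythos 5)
Your proposal is correct and follows essentially the same route as the paper: the subordination representation \eqref{t-00030}, the Schwarz-lemma estimate $|\omega(z)|\le|z|$ giving $(1-|z|^2)|P_f(z)|\le|A+1|(1+|z|)\le 2|A+1|=4(1-\beta)\cos\alpha$, and sharpness via $f_0$ from \eqref{t-00075} as $z\to 1^-$. The only difference is that you spell out the monotonicity of $s/(1-s)$ and the extremal computation explicitly, which the paper leaves to the reader.
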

\begin{proof}
For $-\pi/2<\alpha<\pi/2$ and $0\le \beta<1$, let $f\in\mathcal{S}_{\alpha}(\beta)$. Then from \eqref{t-00030}, the pre-Schwarzian derivative $P_f(z)$ is given by
$$P_f(z)=\frac{f''(z)}{f'(z)}=\frac{(A+1)\omega(z)}{z(1-\omega(z))},$$
where $\omega\in\mathcal{B}_0$ and $A=e^{-i\alpha}(e^{-i\alpha}-2\beta\cos{\alpha})$. Therefore,
\begin{align*}
||P_f||&=\sup\limits_{z\in\mathbb{D}}(1-|z|^2)|P_f(z)|\\
&\le |A+1|\sup\limits_{0\le |z|<1}(1+|z|)\\
&=2|A+1|=4(1-\beta)\cos{\alpha}.
\end{align*}
It is easy to verify that the equality occur for the function $f_0\in \mathcal{S}_{\alpha}$ defined by \eqref{t-00075}.

\end{proof}

For the particular value $\alpha=0$ and $\beta=0$, one can obtain the pre-Schwarzian norm for the functions in the class $\mathcal{C}$, which was first proved by Yamashita \cite{Yamashita-1999}.

\begin{cor}
If $f\in\mathcal{S}_0(0)=:\mathcal{C}$, then the pre-Schwarzian norm satisfies the sharp inequality $$||P_f||\le 4.$$
\end{cor}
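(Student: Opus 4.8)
The plan is to derive this bound directly as the special case $\alpha=0$, $\beta=0$ of Theorem \ref{Thm-t-00015}, and then to confirm sharpness by specializing that theorem's extremal function. First I would note the definitional identification $\mathcal{S}_0(0)=\mathcal{C}$: with $\alpha=\beta=0$ the membership condition of $\mathcal{S}_\alpha(\beta)$ becomes ${\rm Re\,}(1+zf''(z)/f'(z))>0$, which is exactly the convexity condition. Hence every convex $f$ belongs to the class governed by Theorem \ref{Thm-t-00015}, and the theorem applies verbatim.

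The decisive step is then purely evaluative: substituting $\alpha=0$ and $\beta=0$ into the sharp estimate $\|P_f\|\le 4(1-\beta)\cos\alpha$ provided by Theorem \ref{Thm-t-00015} yields $\|P_f\|\le 4(1-0)\cos 0=4$, which is precisely the claimed inequality. No additional estimation is needed, since the parent bound is already sharp and the reduction does not introduce any new supremum to control.

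For sharpness of the constant $4$, I would specialize the extremal function $f_0$ of \eqref{t-00075}. At $\alpha=\beta=0$ one has $A=e^{-i\alpha}(e^{-i\alpha}-2\beta\cos\alpha)=1$, so \eqref{t-00075} reads $1+zf_0''(z)/f_0'(z)=(1+z)/(1-z)$; solving gives $f_0'(z)=(1-z)^{-2}$, that is $f_0(z)=z/(1-z)$, the half-plane mapping, which is convex and hence lies in $\mathcal{C}$. Then $P_{f_0}(z)=f_0''(z)/f_0'(z)=2/(1-z)$, and evaluating the hyperbolic sup-norm along the positive real axis gives $(1-t^2)|P_{f_0}(t)|=2(1+t)\to 4$ as $t\to 1^-$, so $\|P_{f_0}\|=4$ and the bound cannot be improved. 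I do not anticipate a genuine obstacle; the only point needing minor care is checking that the abstract extremal $f_0$ of Theorem \ref{Thm-t-00015} collapses, under $\alpha=\beta=0$, to an explicit convex function, which the computation above confirms.
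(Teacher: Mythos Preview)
Your proposal is correct and follows exactly the route the paper intends: the corollary is stated as the specialization $\alpha=\beta=0$ of Theorem~\ref{Thm-t-00015}, with sharpness witnessed by the extremal $f_0$ of \eqref{t-00075}, which in this case is $f_0(z)=z/(1-z)$. Your explicit verification of the extremal computation is more detailed than the paper's own treatment, but the approach is the same.
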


For the particular value $\beta=0$, one can obtain the pre-Schwarzian norm for functions in Robertson class $\mathcal{S}_{\alpha}$, which was obtained by Ali and Pal \cite{Ali-Sanjit-2023c}.

\begin{cor}
If $f\in\mathcal{S}_{\alpha}(0)=:\mathcal{S}_{\alpha}$, then the pre-Schwarzian norm satisfies the sharp inequality $$||P_f||\le 4\cos{\alpha}.$$
\end{cor}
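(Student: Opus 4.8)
The plan is to exploit the subordination characterization \eqref{t-00010} of the class $\mathcal{S}_{\alpha}(\beta)$ to write the pre-Schwarzian derivative explicitly in terms of a Schwarz function, and then estimate directly using $|\omega(z)| \le |z|$. First I would recall from \eqref{t-00030} that for $f \in \mathcal{S}_{\alpha}(\beta)$ there exists $\omega \in \mathcal{B}_0$ with
\[
P_f(z) = \frac{f''(z)}{f'(z)} = \frac{(A+1)\,\omega(z)}{z(1-\omega(z))}, \qquad A = e^{-i\alpha}(e^{-i\alpha} - 2\beta\cos\alpha).
\]
This is exactly the formula already derived in the proof of Theorem \ref{Thm-t-0005}, so no new computation is needed to get here.

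Next I would take absolute values and multiply by $(1-|z|^2)$. Writing $s = |\omega(z)|$, Schwarz's lemma gives $s \le |z|$, so
\[
(1-|z|^2)\,|P_f(z)| \le |A+1|\,\frac{(1-|z|^2)\,s}{|z|\,(1-s)}.
\]
The function $t \mapsto t/(1-t)$ is increasing on $[0,1)$, so the right-hand side is maximized at $s = |z|$, yielding the bound $|A+1|\,(1-|z|^2)/(1-|z|) = |A+1|(1+|z|)$, which is itself increasing in $|z|$ and bounded above by $2|A+1|$ as $|z| \to 1$. Taking the supremum over $z \in \mathbb{D}$ then gives $\|P_f\| \le 2|A+1|$.

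It remains to compute $2|A+1|$ in terms of $\alpha$ and $\beta$. Since $A+1 = e^{-i\alpha}(e^{-i\alpha} - 2\beta\cos\alpha) + 1 = e^{-2i\alpha} - 2\beta\cos\alpha\, e^{-i\alpha} + 1$, one pulls out $e^{-i\alpha}$ to get $A+1 = e^{-i\alpha}(e^{-i\alpha} + e^{i\alpha} - 2\beta\cos\alpha) = e^{-i\alpha}(2\cos\alpha - 2\beta\cos\alpha) = 2(1-\beta)\cos\alpha\, e^{-i\alpha}$, so $|A+1| = 2(1-\beta)\cos\alpha$ (this is positive since $|\alpha| < \pi/2$ and $\beta < 1$). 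Hence $\|P_f\| \le 4(1-\beta)\cos\alpha$. Finally, for sharpness I would take the function $f_0$ defined by \eqref{t-00075}, i.e. the one with $\omega(z) = z$; then $P_{f_0}(z) = (A+1)/(1-z)$ and $(1-|z|^2)|P_{f_0}(z)| = |A+1|(1+|z|)/(|1-z|) \cdot (1-|z|)$ evaluated along $z = t \to 1^-$ on the positive real axis gives $(1-t^2)|P_{f_0}(t)| = |A+1|(1+t) \to 2|A+1| = 4(1-\beta)\cos\alpha$, so the estimate is attained in the limit.

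There is no real obstacle here — the only mild subtlety is the algebraic simplification of $|A+1|$, and one should double-check the monotonicity steps (first in $s$, then in $|z|$) are in the correct direction; everything else is a direct application of Schwarz's lemma, so this proof is considerably shorter than that of Theorem \ref{Thm-t-0005}.
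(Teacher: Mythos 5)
Your proposal is correct and follows essentially the same route as the paper: the corollary is just the case $\beta=0$ of Theorem \ref{Thm-t-00015}, and your derivation of that theorem (the representation \eqref{t-00030}, the bound $|1-\omega(z)|\ge 1-|\omega(z)|$ together with Schwarz's lemma to get $(1-|z|^2)|P_f(z)|\le |A+1|(1+|z|)$, the identity $|A+1|=2(1-\beta)\cos\alpha$, and sharpness via $f_0$ from \eqref{t-00075}) matches the paper's argument, with the monotonicity steps spelled out slightly more explicitly.
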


\vspace{4mm}
\textbf{Data availability:} Data sharing not applicable to this article as no data sets were generated or analysed during the current study.\\

%\textbf{Authors contributions:}  All authors contributed equally to the investigation of the problem and the order of the authors is given alphabetically according to the surname. All authors read and approved the final manuscript.\\

\textbf{Acknowledgement:} The author thanks the University Grants Commission, India for the financial support through UGC Fellowship (Grant No. MAY2018-429303).

\end{document}